\newtheorem{thm}{Theorem}[section]
\newtheorem{lemma}[thm]{Lemma}
\theoremstyle{definition}
\numberwithin{equation}{section}
\def\Ocal{\mathcal{O}}
\def\Ascr{\mathscr{A}}
\begin{document}

\title[Products of primes in arithmetic progressions]{Products of primes in arithmetic progressions: a footnote in parity breaking}

\author{Olivier Ramar\'e}
\address{Olivier Ramar\'e\\
CNRS / Institut de Math\'ematiques de Marseille\\
Aix Marseille Universit\'e, U.M.R. 7373\\
Site Sud, Campus de Luminy, Case 907\\
13288 MARSEILLE Cedex 9, France}

\email{olivier.ramare@univ-amu.fr}
\urladdr{http://iml.univ-mrs.fr/~ramare/}

\author{Aled Walker}
\address{Aled Walker\\
Mathematical Institute\\
University of Oxford\\
Andrew Wiles Building\\
Radcliffe Observatory Quarter\\
Woodstock Road\\
Oxford\\
OX2 6GG, United Kingdom}

\email{walker@maths.ox.ac.uk}
\urladdr{https://www.maths.ox.ac.uk/people/aled.walker}

\subjclass[2000]{Primary: 11N13, 11A41, Secundary: 11N37, 11B13}

\keywords{Primes in arithmetic progressions, Least prime quadratic
  residue, Linnik's Theorem}

\thanks{The first author has been partly supported by the Indo-French
  Centre for the Promotion of Advanced Research -- CEFIPRA, project No
  5401-1. The second author was supported by the EPSRC Grant EP/M50659X/1.}

\begin{abstract}
  We prove that, if $x$ and $q\leqslant x^{1/16}$ are two
  parameters, then for any invertible residue class $a$ modulo $q$
  there exists a product of exactly three primes, each one below
  $x^{1/3}$, that is congruent to $a$ modulo $q$.
\end{abstract}

\maketitle

\bigskip
\section{Introduction and results}

Xylouris' version of Linnik's Theorem \cite{Xylouris*11} tells us
that, for every modulus $q$ and every invertible residue class $a$
modulo~$q$, one can find a prime congruent to $a$ modulo~$q$ that is
below $q^{5.18}$ \emph{provided} $q$ be large enough. The proof relies
on intricate techniques, and though the result is indeed effective, no
one has been able to give any explicit version of it. The aim of this
paper is to show that one can easily access a fully explicit result,
with respectable constants, provided one replaces primes by products
of three primes. Here is what we prove, by combining a simple sieve
technique together with classical additive combinatorics.

\begin{thm}
  \label{main}
  Let $x$ and $q\le x^{1/16}$ be two parameters. Then for any
  invertible residue class $a$ modulo~$q$, there exists a product of
  three primes, all below $x^{1/3}$, that is congruent to $a$ modulo~$q$.
\end{thm}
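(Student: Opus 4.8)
The plan is to translate the statement into a purely multiplicative covering problem in the group $G := (\Z{q})^\times$ and then to solve it by combining a sieve lower bound with Kneser's theorem. Write $y := x^{1/3}$ and let $S \subseteq G$ be the set of residues $p \bmod q$ occupied by the primes $p < y$ with $p \nmid q$. A product of exactly three primes below $x^{1/3}$ lying in the class $a$ exists if and only if $a$ belongs to the threefold product set $S^3 = \{ s_1 s_2 s_3 : s_i \in S\}$, so it suffices to prove $S^3 = G$ for every admissible pair $(x,q)$. I will first show that $S$ occupies more than a third of $G$, i.e.\ $|S| > \varphi(q)/3$, and then upgrade this density statement to $S^3 = G$ by additive combinatorics.

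For the density bound I would count primes by residue class. The number of primes below $y$ and coprime to $q$ is at least $\pi(y) - \omega(q)$, while Brun--Titchmarsh gives $\pi(y;q,b) \le 2y/\bigl(\varphi(q)\log(y/q)\bigr)$ for every class $b$, so that no single class can absorb too many primes. Dividing the total count by this per-class maximum yields
\[
 |S| \ \ge\ \frac{\pi(y)-\omega(q)}{2y/\bigl(\varphi(q)\log(y/q)\bigr)} \ \ge\ \frac{\varphi(q)}{2}\cdot\frac{\log(y/q)}{\log y}\,\bigl(1+o(1)\bigr),
\]
after inserting a Chebyshev-type lower bound $\pi(y) \ge (1+o(1))\,y/\log y$. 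The hypothesis $q \le x^{1/16} = y^{3/16}$ forces $\log(y/q)/\log y \ge 13/16$, whence $|S| \ge \tfrac{13}{32}\varphi(q)(1+o(1)) > \varphi(q)/3$. I should stress that the factor $2$ in Brun--Titchmarsh --- the arithmetic shadow of the parity phenomenon --- is exactly what prevents the cleaner bound $|S| > \varphi(q)/2$ that a two-prime pigeonhole argument would require; three factors are forced upon us, and the exponent $1/16$ is chosen to leave a comfortable margin above $1/3$.

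For the combinatorial step I would apply Kneser's theorem to the product set $S^2$. Let $H$ be the stabiliser of $S^2$ in $G$. If $H = G$ then $S^2 = G$ and a fortiori $S^3 = G$, so assume $H$ is proper and pass to the quotient $\bar G := G/H$, writing $\bar S$ for the image of $S$. Since $S^2$ is a union of $H$-cosets, Kneser's bound $|S^2| \ge 2|SH| - |H|$ becomes $|\bar S^2| \ge 2|\bar S| - 1$ in $\bar G$, while the density is inherited, $|\bar S| = |SH|/|H| \ge |S|/|H| \ge \tfrac{13}{32}|\bar G|$. Consequently
\[
 |\bar S^2| + |\bar S| \ \ge\ 3|\bar S| - 1 \ \ge\ \tfrac{39}{32}|\bar G| - 1 \ >\ |\bar G|
\]
once $|\bar G| \ge 5$, so the elementary pigeonhole principle (for any $A,B \subseteq \bar G$ with $|A|+|B|>|\bar G|$ one has $AB=\bar G$) gives $\bar S^2\cdot\bar S = \bar G$. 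Finally $S^3 = S^2\cdot S$ is itself $H$-invariant because $S^2$ is, so it is a union of $H$-cosets whose image fills $\bar G$; therefore $S^3 = G$, as required.

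The main obstacle is the density step, since everything hinges on clearing the threshold $|S| > \varphi(q)/3$ with honest, explicit constants: one must combine the sharp Brun--Titchmarsh constant $2$, an explicit Chebyshev (or Rosser--Schoenfeld) lower bound for $\pi(y)$, the loss $\omega(q)$, and the ratio $\log(y/q)/\log y \ge 13/16$ so that the product still exceeds $1/3$. Once this margin is secured the additive combinatorics is soft, although the boundary quotients with $|\bar G| \le 4$ (where the displayed inequality can fail by a single unit) and the correspondingly small values of $x$ should be dispatched either by the explicit slack in the density bound or by direct verification.
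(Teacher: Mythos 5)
Your overall strategy --- the density bound $|S|\ge\tfrac{13}{32}\phi(q)$ via Brun--Titchmarsh played against a lower bound for $\pi(y)$, followed by Kneser's theorem and a case analysis on the index of the stabiliser $H$ of $S^2$ --- is exactly the paper's, and your treatment of every index except $2$ is sound (for indices $3$ and $4$ the integrality of $|\bar S|$ rescues the displayed inequality, as you anticipate). But the index-$2$ case is a genuine gap, and it is not a boundary nuisance: it is the entire arithmetic content of the paper. When $[G:H]=2$ your counting gives only $|\bar S|\ge\lceil 2\cdot\tfrac{13}{32}\rceil=1$, and if $S$ really were contained in a single coset of an index-$2$ subgroup $\ker\chi$ for a quadratic character $\chi$, then $S^3$ would also lie in a single coset and the conclusion would fail for half the classes $a$. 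No ``explicit slack'' in the density bound can repair this: Brun--Titchmarsh with its constant $2$ caps the provable density strictly below $\tfrac12$ (this is precisely the parity obstruction you allude to), so you can never force $|\bar S|\ge 2$ by counting alone. Nor is ``direct verification'' available, since every $q\ge 3$ carries a quadratic character, so infinitely many moduli are concerned. What is missing is an arithmetic input guaranteeing that the primes below $y$ meet both cosets of $\ker\chi$: a prime $p\le y$ with $\chi(p)=1$ (a least-prime-quadratic-residue bound) together with one satisfying $\chi(p)=-1$. The paper devotes most of its lemmas to exactly this point: an elementary Gel'fond-style lower bound $L(1,\chi)\ge \pi/(4\phi(q))-\pi/\phi(q)^2$, played off against a Pintz-type upper bound for $\sum_{n\le x}(1\star\chi)(n)$, produces a prime $p\le q^4$ with $\chi(p)=1$, and the hypothesis $q\le x^{1/16}$ is calibrated so that $q^4\le x^{1/4}\le y$, which places that prime inside your set $S$. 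Without an ingredient of this kind your proof cannot close.

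Two smaller points. First, the $(1+o(1))$ in your density bound must be replaced by an explicit inequality valid from an explicit threshold onward (the paper uses Dusart's $\pi(y)\ge y/(\log y-1)$ for $y\ge 5393$, absorbing the $\omega(q)$ loss), with the finitely many small $x$ --- where $q\le x^{1/16}$ forces $q\le 10$ --- checked by hand; you gesture at this correctly. Second, your passage to the quotient $G/H$ and the pigeonhole $|\bar S^2|+|\bar S|>|\bar G|$ is a clean repackaging of the paper's final step (which instead proves $|S^2|\ge\tfrac{7}{10}\phi(q)$ in $G$ and intersects $S^2$ with $b/S$) and is fine as written for index at least $3$.
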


We did not try to be optimal in our treatment but sought the simplest
argument. The main surprise is that we use sieve techniques in the
form of Brun-Titchmarsh inequality but we are not blocked by the
parity principle. The reader may argue that we use a lower bound for
$L(1,\chi)$, but the bound we employ is the weakest possible and does
not rely on Siegel's Theorem. In particular, it is not strong enough
to push a possible Siegel zero away from~1, a fact known to be
equivalent to the parity phenomenom (see \cite{Motohashi*79} and \cite{Ramachandra-Sankaranarayanan-Srinivas*96}, or
\cite[Chapter 6]{Ramare*06} for a more complete discussion).

Our theorem is also linked with a conjecture of Erd\"os that says that
every invertible congruence class should contain a product of two
primes not more than $q$. This is discussed in work of the second
author \cite{Walker*16}.

A numerically improved version is being prepared.
\section{Lemmas}

We begin with some crude bounds. 
Let us define
\begin{equation}
  \label{deff0}
  f_0(q)=\prod_{p|q}(1-1/\sqrt{p})^{-1}.
\end{equation}
\begin{lemma}
  \label{boundf0}
  For $q\geqslant 2$ we have 
  $
    f_0(q)\le 3.32 \sqrt{q}
  $
\end{lemma}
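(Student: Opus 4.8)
The plan is to bound the ratio $f_0(q)/\sqrt{q}$ uniformly and to verify that its supremum over all $q\geqslant 2$ lies below $3.32$. Writing $q=\prod_{p\mid q}p^{a_p}$, I would first record the identity
\[
\frac{f_0(q)}{\sqrt{q}}=\prod_{p\mid q}\frac{\sqrt{p}}{(\sqrt{p}-1)\,p^{a_p/2}}=\prod_{p\mid q}\frac{1}{(\sqrt{p}-1)\,p^{(a_p-1)/2}}.
\]
Since each factor $p^{(a_p-1)/2}\geqslant 1$, this product is largest when $q$ is squarefree, so it suffices to bound $\prod_{p\mid q}(\sqrt{p}-1)^{-1}$ over squarefree $q$.

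Next I would examine the single-prime factor $g(p):=(\sqrt{p}-1)^{-1}$. One computes $g(2)=\sqrt{2}+1$ and $g(3)=(\sqrt{3}+1)/2$, both exceeding $1$, whereas for every prime $p\geqslant 5$ one has $\sqrt{p}-1>1$ and hence $g(p)<1$. Consequently the product $\prod_{p\mid q}g(p)$ can only decrease when any prime $p\geqslant 5$ is adjoined to the set of divisors, so its maximum over all integers $q\geqslant 2$ is attained at $q=6$, giving
\[
\frac{f_0(q)}{\sqrt{q}}\leqslant g(2)\,g(3)=\frac{(\sqrt{2}+1)(\sqrt{3}+1)}{2}=3.298\ldots<3.32.
\]

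There is no genuine obstacle here: the argument is entirely elementary, and the only point requiring a moment's care is the observation that the extremal $q$ is $6$ rather than a product of many small primes — this is precisely because only the primes $2$ and $3$ contribute a factor exceeding $1$. The closing numerical check $\tfrac12(\sqrt{2}+1)(\sqrt{3}+1)=\tfrac12(1+\sqrt{2}+\sqrt{3}+\sqrt{6})<3.32$ is routine.
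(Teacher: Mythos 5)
Your proof is correct and follows essentially the same route as the paper: both arguments reduce to bounding each local factor $(1-1/\sqrt{p})^{-1}$ by $\sqrt{p}$ for $p\geqslant 5$ and absorbing the excess from $p=2,3$ into the constant $\tfrac12(\sqrt{2}+1)(\sqrt{3}+1)\approx 3.298<3.32$. Your version is marginally sharper in presentation (you identify $q=6$ as the extremal case and compute the constant exactly rather than rounding each factor), but the substance is identical.
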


\begin{proof}
For all primes $p$ we have $(1-1/\sqrt{p})^{-1}\leqslant \alpha_p\sqrt{p}$, where $$\alpha_p = \begin{cases} 
\frac{1}{\sqrt{2}-1} & p=2 \\
\frac{1}{\sqrt{3}-1} & p=3 \\
1 & \text{otherwise,}
\end{cases}$$
\noindent and since $\alpha_2\leqslant 2.42$ and $\alpha_3\leqslant 1.37$, we obtain the inequalities 

\begin{equation*}
f_0(q)\leqslant 2.42\cdot 1.37\cdot \sqrt{q} \leqslant 3.32\cdot \sqrt{q}.
\end{equation*}
\end{proof}

\noindent We will also require a rudimentary estimate on $\phi(q)$. 

\begin{lemma}
  \label{boundforphi}
  If $q\geqslant 31$ then $\phi(q)> 8$. 
\end{lemma}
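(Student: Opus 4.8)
The plan is to deduce the bound from the elementary inequality $\phi(n)\geqslant\sqrt{n}$, which reduces the claim to a finite verification. Note first that the threshold $31$ is sharp, since $\phi(30)=\phi(2\cdot 3\cdot 5)=8$; so what I really have to show is that $30$ is the largest integer whose totient does not exceed $8$. Since $\phi$ is multiplicative it suffices to control it on prime powers, and I would check that $\phi(p^{a})=p^{a-1}(p-1)\geqslant p^{a/2}=\sqrt{p^{a}}$ for every prime power other than $p^{a}=2$. Indeed this reduces to $p^{a/2-1}(p-1)\geqslant 1$, which for $p\geqslant 3$ follows from $p-1\geqslant\sqrt{p}$, and for $p=2$ holds as soon as $a\geqslant 2$. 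Multiplying over the prime-power factors then yields $\phi(n)\geqslant\sqrt{n}$ for every $n$ that is not of the form $2m$ with $m$ odd.

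For such $n$ the inequality $\sqrt{n}>8$ already settles the matter once $n>64$, so only finitely many $n$ remain to be examined. The one case the bound $\phi(n)\geqslant\sqrt{n}$ fails to cover is $n\equiv 2\pmod 4$, where the factor $\phi(2)=1$ is too small; here I would instead write $n=2m$ with $m$ odd and use $\phi(n)=\phi(m)$, applying the prime-power bound to the odd number $m$ to get $\phi(n)=\phi(m)\geqslant\sqrt{m}=\sqrt{n/2}$. This again confines $n$ to a bounded range, so in every case the problem collapses to checking a finite, explicitly bounded set of integers.

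It then remains to verify directly that no integer $n\geqslant 31$ in the relevant bounded range has $\phi(n)\leqslant 8$; equivalently, that the only solutions of $\phi(n)\leqslant 8$ are integers $n\leqslant 30$. Since $\phi(n)$ is even for $n\geqslant 3$, this amounts to listing the finitely many solutions of $\phi(n)\in\{2,4,6,8\}$ and observing that each is at most $30$. The main difficulty is therefore not analytic but one of bookkeeping: I must be certain the enumeration is genuinely complete, and it is precisely the clean cut-offs furnished by the $\sqrt{n}$ bound that guarantee no large sporadic solution lies beyond the range one inspects by hand.
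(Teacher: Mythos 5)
Your argument is correct, but it takes a different route from the paper. The paper's proof starts from the observation that $\phi(q)\leqslant 8$ forces every prime divisor $p$ of $q$ to satisfy $p-1\leqslant 8$, hence $p\in\{2,3,5,7\}$, and then runs a short case analysis over the admissible exponents; you instead prove the general lower bound $\phi(n)\geqslant\sqrt{n}$ (valid away from $n\equiv 2\bmod 4$, with the variant $\phi(n)\geqslant\sqrt{n/2}$ in the exceptional case) and use it to confine any counterexample to $n\leqslant 128$, after which you enumerate the solutions of $\phi(n)\in\{2,4,6,8\}$. Your prime-power verification $\phi(p^a)\geqslant\sqrt{p^a}$ (except for $p^a=2$) is accurate, the splitting according to $n\bmod 4$ correctly handles cases like $n=6$ where the naive $\sqrt{n}$ bound fails, and the remaining finite check is explicitly bounded, so nothing is missing. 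The trade-off is that the paper's method prunes the search space more aggressively (only divisors of $2^4\cdot 3^2\cdot 5\cdot 7$ need be considered), whereas your method proves a reusable general inequality at the cost of a somewhat larger, though still entirely routine, terminal verification. Both proofs leave that finite verification at the same level of detail, so yours is complete to the same standard as the paper's.
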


\begin{proof}
Recall that $$\phi(n) = n\prod\limits_{p\vert n} \left(1-\frac{1}{p}\right).$$ Therefore if $\phi(q)\leqslant 8$, the only prime factors of $q$ are $2,3,5,7$. By performing an easy case analysis on which of these primes divides $q$, one sees that the only $q$ for which $\phi(q)\leqslant 8$ are $1,2,3,4,5,6,7,8,9,10,12,15,20,24$, and~30.   
\end{proof}

\noindent
We will use the elementary theory of Dirichlet characters, referring the reader to the
excellent monograph \cite{Davenport} of Davenport for an
introduction on the subject. In particular we note the following easy bound.
\begin{lemma}
  \label{boundchi}
  Let $\chi$ be a non-principal Dirichlet character modulo~$q$. Let
  $I$ be a subset of $\{1,\cdots,q\}$. We have
  \begin{equation*}
    \Bigl|\sum_{n\in I}\chi(n)\Bigr|\le \phi(q)/2.
  \end{equation*}
  The same bound holds true for any finite interval instead of~$I$.
\end{lemma}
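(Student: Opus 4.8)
The plan is to lean on two elementary facts: a non-principal character sums to zero over any complete residue system, and $|\chi(n)|\le 1$ with equality precisely on the $\phi(q)$ residues coprime to~$q$. First I would record that $\sum_{n=1}^{q}\chi(n)=0$. This is the usual consequence of non-principality: choosing $b$ coprime to~$q$ with $\chi(b)\ne 1$, the substitution $n\mapsto bn$ permutes $\{1,\dots,q\}$ modulo~$q$, so $\chi(b)\sum_{n=1}^{q}\chi(n)=\sum_{n=1}^{q}\chi(bn)=\sum_{n=1}^{q}\chi(n)$, forcing the sum to vanish.

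Write $S=\sum_{n\in I}\chi(n)$ and let $J=\{1,\dots,q\}\setminus I$ be the complement. The vanishing of the full sum gives $S=-\sum_{n\in J}\chi(n)$, so in particular $|S|=\bigl|\sum_{n\in J}\chi(n)\bigr|$. Applying the triangle inequality to each of the two expressions for $S$, and using that $|\chi(n)|=1$ when $\gcd(n,q)=1$ and $|\chi(n)|=0$ otherwise, I obtain $|S|\le a$ and $|S|\le b$, where $a=\#\{n\in I:\gcd(n,q)=1\}$ and $b=\#\{n\in J:\gcd(n,q)=1\}$. Since $I$ and $J$ partition $\{1,\dots,q\}$, we have $a+b=\phi(q)$, and adding the two bounds yields $2|S|\le a+b=\phi(q)$, which is exactly the asserted inequality.

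For the statement about an arbitrary finite interval, I would reduce to the case already handled by exploiting the $q$-periodicity of~$\chi$. Any interval of consecutive integers splits into a whole number of complete blocks of length~$q$, each of which contributes zero to the character sum by the first paragraph, together with a leftover run of fewer than $q$ consecutive integers. Those leftover integers occupy pairwise distinct residue classes modulo~$q$, so by periodicity the sum over the interval equals the sum over a genuine subset of $\{1,\dots,q\}$, and the first part applies without change. I do not expect any serious obstacle here; the only points requiring care are arranging the complement so that the two triangle-inequality bounds sum to exactly $\phi(q)$, and performing the interval reduction cleanly so that no residue class is double-counted.
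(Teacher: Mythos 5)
Your argument is correct and follows essentially the same route as the paper: use $\sum_{n=1}^{q}\chi(n)=0$ to bound $|S|$ by both the number of coprime residues in $I$ and in its complement, then observe these counts sum to $\phi(q)$ (the paper phrases this as ``one of the two is at most $\phi(q)/2$,'' you add the two bounds---same idea), and reduce the interval case to the subset case by $q$-periodicity. Nothing to change.
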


\begin{proof}
  We know on the one hand that, we have
  $\sum_{1\le n\le q}\chi(n)=0$ by orthogonality, and on the other hand that $\chi(n)$
  does not vanish only when $n$ belongs to the multiplicative group,
  say $\mathcal{U}_q$, of $\mathbb{Z}/q\mathbb{Z}$. We can hence bound
  $|\sum_{n\in I}\chi(n)|$ by the cardinal of
  $I\cap\mathcal{U}_q$ and by the cardinal of $\mathcal{U}_q\setminus
  I\cap\mathcal{U}_q$. One of them is not more than $\phi(q)/2$,
  proving the first part of the lemma. When $I$ is a finite
  interval, we note that the sum of the values of $\chi(n)$ on any
  $q$~consecutive integers vanishes, reducing the problem to the first case.
\end{proof}

We next modify an idea of Gel'fond from \cite{Gelfond*56}, which is maybe
more easily read in~\cite{Gelfond-Linnik*65}. 
\begin{lemma}
  Let $\chi$ be a non-principal quadratic character modulo~$q$. We have
  \begin{equation*}
    L(1,\chi)\ge \frac{\pi}{4\phi(q)}-\frac{\pi}{\phi(q)^2}.
  \end{equation*}
\end{lemma}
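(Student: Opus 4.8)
The plan is to turn $L(1,\chi)$ into an integral of a rational (or exponential) kernel against the character, and then to extract a clean positive main term while controlling the oscillatory remainder by the crude bound of Lemma~\ref{boundchi}. Concretely, I would start from the conditionally convergent series $L(1,\chi)=\sum_{n\ge 1}\chi(n)/n$ and insert $1/n=\int_0^1 x^{n-1}\,dx$. Summing the resulting geometric-type series over the residue classes modulo $q$ and using the periodicity of $\chi$ gives the exact representation
\[
  L(1,\chi)=\int_0^1\frac{\sum_{a=1}^{q-1}\chi(a)\,x^{a-1}}{1-x^q}\,dx,
\]
the interchange being justified because the partial sums of $\sum\chi(n)/n$ converge. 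This is the Gel'fond-type identity alluded to in \cite{Gelfond-Linnik*65}, and it is the natural vehicle here: for instance when $q=4$ the numerator equals $1-x^2$, the fraction collapses to $1/(1+x^2)$, and one recovers $L(1,\chi)=\int_0^1 dx/(1+x^2)=\pi/4$. This computation is exactly where the constant $\pi/4$ in the statement originates.

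Next I would perform Abel summation on the numerator, introducing the partial sums $S(a)=\sum_{b\le a}\chi(b)$. Since $\chi$ is non-principal we have $S(q-1)=0$, so
\[
  \sum_{a=1}^{q-1}\chi(a)\,x^{a-1}=(1-x)\sum_{a=1}^{q-1}S(a)\,x^{a-1},
\]
whence $L(1,\chi)=\int_0^1\bigl(\sum_{a=1}^{q-1}S(a)x^{a-1}\bigr)/(1+x+\cdots+x^{q-1})\,dx$. The term $a=1$ contributes $S(1)=1$ and furnishes the positive main term carrying the factor $\pi/4$ under the correct normalisation; the remaining terms oscillate, and this is precisely where I would invoke Lemma~\ref{boundchi} in the form $|S(a)|\le\phi(q)/2$. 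Estimating the contribution of the remaining block by $\tfrac{\phi(q)}{2}$ times an explicit elementary integral should produce an error of size $\pi/\phi(q)^2$, yielding the claimed inequality $L(1,\chi)\ge \pi/(4\phi(q))-\pi/\phi(q)^2$. Note that the right-hand side is negative unless $\phi(q)\ge 5$, so the bound is only asserting a weak positive floor, consistent with the paper's emphasis that this is the weakest usable estimate.

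The main obstacle will be that the kernel $1/(1+x+\cdots+x^{q-1})$ is largest near the singular endpoint $x=1$, equivalently the Abel weights attached to small $a$ are of size comparable to $q$. Applying $|S(a)|\le \phi(q)/2$ \emph{uniformly} across all $a$ would therefore lose a factor of order $\phi(q)$ and destroy the bound, since the genuine main term is only of size $1/\phi(q)$. The delicate step is thus the \emph{separation}: the leading contribution must be retained exactly (using $S(1)=1$, and possibly the exact small values $S(2),S(3)$ near the endpoint), and the crude bound of Lemma~\ref{boundchi} must be reserved for the genuinely oscillatory part of the integral, where the kernel is already small. Carrying out this separation so that the retained piece evaluates to $\pi/(4\phi(q))$ while the discarded piece is controlled by $\pi/\phi(q)^2$ is the crux of the argument; the remaining estimates are routine computations with the elementary integrals that arise.
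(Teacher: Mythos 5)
Your proposal does not follow the paper's route, and the route you sketch cannot be completed. The paper's proof is Gel'fond's positivity argument: one studies $S(\alpha)=\sum_{n\ge1}(1\star\chi)(n)e^{-n\alpha}$, uses that the convolution $1\star\chi$ is non-negative and is at least $1$ on perfect squares to obtain $1+S(\alpha)\ge\sum_{m\ge0}e^{-m^2\alpha}\ge\sqrt{\pi}/(2\sqrt{\alpha})$ (this Gaussian integral, not $\int_0^1 dx/(1+x^2)$, is where the $\pi$ originates), and then expands $S(\alpha)=L(1,\chi)/\alpha-\sum_{d\ge1}\chi(d)g(\alpha d)$ with $g(x)=1/x-1/(e^x-1)$, bounding the last sum by $\phi(q)/4$ via Lemma~\ref{boundchi} and partial summation; choosing $\alpha=\pi/\phi(q)^2$ gives the statement. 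The multiplicative input --- that $1\star\chi\ge0$ everywhere and $\ge1$ at squares, which uses $\chi^2=\chi_0$ --- is the engine of the whole proof.

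Your plan discards exactly that engine, and this is a fatal gap rather than a technical one. The identity $L(1,\chi)=\int_0^1\bigl(\sum_a\chi(a)x^{a-1}\bigr)(1-x^q)^{-1}dx$ and its Abel-summed form are correct, but the decomposition cannot deliver the bound. The $a=1$ term equals $\int_0^1(1-x)(1-x^q)^{-1}dx$, which tends to $1/2$ as $q\to\infty$; it is not of size $\pi/(4\phi(q))$, so the remaining terms must cancel almost all of it, and that cancellation is invisible to the bound $|S(a)|\le\phi(q)/2$: applying it to the terms $a>A$ for any fixed $A$ yields an error of order $\phi(q)$, which swamps both the head (of size $O(\log A)$) and the target $\pi/(4\phi(q))$. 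More decisively, the only inputs in your argument are that $\chi$ is $q$-periodic with mean zero, finitely many exact initial values, and the trivial partial-sum bound; there exist non-multiplicative functions satisfying all of these with negative Dirichlet series at $1$ (for example $f(1)=1$, $f(2)=f(3)=f(4)=-1$, $f(q-2)=f(q-1)=1$, zero elsewhere, has $|S(a)|\le2$, mean zero, and $\sum_n f(n)/n=-\tfrac1{12}+O(1/q)$, and the negative block can be pushed past any fixed initial segment). Since the lemma asserts $L(1,\chi)>0$ whenever $\phi(q)\ge6$, no argument of the shape you describe can prove it; the missing idea is the global use of multiplicativity through the non-negativity of $1\star\chi$.
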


\begin{proof}
  We consider the sum $S(\alpha)=\sum_{n\ge1}(1\star
  \chi)(n)e^{-n\alpha}$ for real positive $\alpha$. Since $(1\star
  \chi)(m^2)\ge1$ for every integer $m$, and $(1\star
  \chi)(n)\ge0$ in general, a comparison with an integral gives us
  \begin{equation*}
    1+S(\alpha)\ge \sum_{m\ge0}e^{-m^2\alpha}
    \ge \int_0^\infty e^{-\alpha t^2}dt
    = \frac{\Gamma(1/2)}{2\sqrt{\alpha}}
    = \frac{\sqrt{\pi}}{2\sqrt{\alpha}}.
  \end{equation*}
  On the other hand we can expand $(1\star
  \chi)(n)=\sum_{d|n}\chi(d)$ and get
  \begin{equation*}
    S(\alpha)=\sum_{d\ge1}\frac{\chi(d)}{e^{\alpha d}-1}
    =
    \frac{L(1,\chi)}{\alpha}-\sum_{d\ge1}\chi(d) g(\alpha d)
  \end{equation*}
  by using the non-negative non-increasing function
  $g(x)=\frac{1}{x}-\frac{1}{e^x-1}$. We  find that, by Lemma~\ref{boundchi},
  \begin{align*}
   \sum_{d\ge1}\chi(d) g(\alpha d)
     &=
    -\sum_{d\ge1}\chi(d)\int_{\alpha d}^\infty  g'(t)dt
     \\&=
     -\int_0^\infty \sum_{d\le t/\alpha }\chi(d)g'(t)dt
     \\&\ge
     \frac{\phi(q)}{2}\int_0^\infty g'(t)dt
         = -{\phi(q)}/{4}
  \end{align*}
  since $\operatorname{lim}g(x)=1/2$ as $x$ tends to $0$ from above.
  By comparing both upper and lower estimate
  for $S(\alpha)$, we reach
  \begin{equation*}
    L(1, \chi)\ge \frac{\sqrt{\pi\alpha}}{2}-\alpha-\frac{\alpha \phi(q)}{4}.
  \end{equation*}
  We select $\alpha =
  \pi/\phi(q)^2$. The lemma follows.
\end{proof}


\begin{lemma}
  \label{linnik}
  Let $q\ge3$ be an integer and $\chi$ be a non-principal quadratic
  character modulo~$q$. Then there is a prime $p$ at most $q^4$ such
  that $\chi(p)=1$.
\end{lemma}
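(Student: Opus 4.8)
The plan is to argue by contradiction, recycling the exponential smoothing of the previous lemma. Suppose that $\chi(p)\neq 1$ for every prime $p\le q^4$; since $\chi$ is real and quadratic this means $\chi(p)\in\{-1,0\}$ for all such~$p$. Keeping the notation $S(\alpha)=\sum_{n\ge1}(1\star\chi)(n)e^{-n\alpha}$, I first note that the computation carried out above in fact yields the two-sided estimate
\begin{equation*}
  S(\alpha)=\frac{L(1,\chi)}{\alpha}+O^{*}\!\Bigl(\tfrac{\phi(q)}{4}\Bigr),
\end{equation*}
where $O^{*}(\,\cdot\,)$ denotes a quantity bounded in absolute value by its argument: indeed Lemma~\ref{boundchi} controls $\bigl|\sum_{d\le t/\alpha}\chi(d)\bigr|$ by $\phi(q)/2$ in \emph{both} directions, and $g$ decreases from $1/2$ to~$0$, so $\bigl|\sum_{d}\chi(d)g(\alpha d)\bigr|\le \phi(q)/4$. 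Combined with the lower bound for $L(1,\chi)$ just established, this gives the clean lower bound $S(\alpha)\ge \alpha^{-1}\bigl(\tfrac{\pi}{4\phi(q)}-\tfrac{\pi}{\phi(q)^{2}}\bigr)-\tfrac{\phi(q)}{4}$.

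Next I would bound $S(\alpha)$ from above using the hypothesis. For $n\le q^4$ every prime factor of $n$ is $\le q^4$, hence either a non-residue or a divisor of $q$; expanding $(1\star\chi)(n)=\prod_{p^{a}\|n}\bigl(1+\chi(p)+\dots+\chi(p)^{a}\bigr)$ then shows that $(1\star\chi)(n)$ vanishes unless the part of $n$ coprime to $q$ is a perfect square, in which case it equals~$1$. By the integral comparison used before,
\begin{equation*}
  \sum_{n\le q^4}(1\star\chi)(n)e^{-n\alpha}\le \sum_{u\mid q^{\infty}}\sum_{(m,q)=1}e^{-um^{2}\alpha}\le \frac{\sqrt{\pi}}{2\sqrt{\alpha}}\sum_{u\mid q^{\infty}}u^{-1/2}=\frac{\sqrt{\pi}}{2\sqrt{\alpha}}\,f_{0}(q)\le \frac{3.32\sqrt{\pi}}{2}\cdot\frac{\sqrt{q}}{\sqrt{\alpha}},
\end{equation*}
the last step invoking Lemma~\ref{boundf0}. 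In the remaining range $n>q^4$ the hypothesis gives no information, so I would simply use $0\le(1\star\chi)(n)\le d(n)$ and exponential decay to estimate the tail $\sum_{n>q^4}d(n)e^{-n\alpha}$.

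Finally I would choose $\alpha$ of order $(\log q)/q^{4}$. With this choice the genuine main term $\alpha^{-1}L(1,\chi)$ has size about $q^{3}/\log q$, whereas the square-count contribution is only of order $q^{5/2}/\sqrt{\log q}$, the smoothing error $\phi(q)/4$ is of order $q$, and the logarithmic factor in $\alpha$ damps the tail by $e^{-\alpha q^{4}}=q^{-c}$, rendering it negligible. Hence for $q$ large enough the lower bound for $S(\alpha)$ strictly exceeds the upper bound, a contradiction; the finitely many small $q$ (in particular those with $\phi(q)\le 8$, i.e. $q<31$ by Lemma~\ref{boundforphi}, together with a bounded further range) are then dispatched by direct inspection.

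The main obstacle is the calibration of $\alpha$, which must reconcile two opposing demands: it has to be small enough that the true main term $\alpha^{-1}L(1,\chi)$ dominates both the square-count $\alpha^{-1/2}f_{0}(q)$ and the smoothing error $\phi(q)$, yet it cannot be taken as small as $q^{-4}$ outright, since the uncontrolled range $n>q^{4}$ would then swamp everything; the logarithmic factor is exactly what balances these constraints. It is worth stressing that a sharp cut-off $\sum_{n\le q^4}(1\star\chi)(n)$ would \emph{not} suffice: the Dirichlet hyperbola method produces an error of order $\sqrt{q^{4}}\,\phi(q)\approx q^{3}$, the very size of the main term, whereas the full exponential smoothing replaces this by the far smaller $\phi(q)/4$, and it is precisely this gain that makes the clean threshold $q^{4}$ reachable by elementary means.
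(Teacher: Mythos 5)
Your argument is sound in outline and reaches the stated lemma by a genuinely different route from the paper. The paper works with the sharp cutoff $\sum_{n\le x}(1\star\chi)(n)$ at $x=q^4$: under the hypothesis this sum is at most $\sqrt{x}\,f_0(q)$, while the expansion $\sum_{d\le x}\chi(d)[x/d]$ produces the main term $x\sum_{d\le x}\chi(d)/d\approx xL(1,\chi)$ together with the fractional-part sum $\sum_{d\le x}\chi(d)\{x/d\}$, which is tamed by Axer's device (split at $y=\sqrt{\phi(q)x/2}$ and apply Abel summation on each level set of $[x/d]$) to size $\sqrt{2\phi(q)x}\approx q^{5/2}$ rather than the naive $\phi(q)\sqrt{x}\approx q^3$. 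You instead recycle the exponential smoothing $S(\alpha)$, which buys you the clean two-sided error of at most $\phi(q)/4$, at the cost of having to truncate by hand at $n=q^4$ and estimate a tail; your calibration $\alpha\asymp(\log q)/q^4$ does make the three error terms (the square-count $\asymp q^{5/2}/\sqrt{\log q}$, the smoothing error $\phi(q)/4$, and a tail $\ll q^{4-c}\log q$) subordinate to the main term $\asymp q^3/\log q$, so the scheme closes for $q$ large, and the remaining $q$ are a finite check. Two caveats. First, your closing claim that a sharp cutoff ``would not suffice'' is refuted by the paper itself: the error of order $q^3$ you cite is the naive hyperbola bound, and Axer's method is exactly the classical fix that brings it down to $q^{5/2}$; this is why the paper needs no smoothing and no tail estimate at this stage. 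Second, your proof is not yet explicit where the paper must be: the tail bound and the constant $c$ in $\alpha$ are left open, and the extra $\log q$ in your main term pushes the threshold beyond which the contradiction takes hold noticeably past the paper's $q\ge 45$ (a rough computation with your constants lands in the several hundreds), so the range to be ``dispatched by direct inspection'' is correspondingly larger. These are matters of bookkeeping rather than substance, but they matter for a paper whose entire point is full explicitness.
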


\begin{proof} 
  We adapt the proof of J. Pintz taken from~\cite{Pintz*77-2}. Assume
  that no primes not more than a given real number $x$ are in the
  kernel of $\chi$. We use the notation $d|q^\infty$ to say that all
  the prime factors of $d$ divides $q$. Then on the one side we have
  \begin{equation*}
    \sum_{n\le x}(1\star\chi)(n)
    =\sum_{d|q^\infty}\sum_{\substack{m^2\le x/d,\\ (m,q)=1}}1
    \le\sum_{d|q^\infty}\sqrt{\frac{x}{d}}\le \sqrt{x}f_0(q)
  \end{equation*}
  where $f_0$ is the function defined in~\eqref{deff0},
while on the other side we can approximate this sum by $L(1,\chi)$ as follows:
\begin{equation*}
  \sum_{n\le x}(1\star\chi)(n)
  =
  \sum_{d\le x}\chi(d)\Bigl[\frac{x}{d}\Bigr]
  =x\sum_{d\le x}\frac{\chi(d)}{d}
  -
  \sum_{d\le x}\chi(d)\Bigl\{\frac{x}{d}\Bigr\}.
\end{equation*}
The first summation over $d$ is an approximation of $L(1,\chi)$
(recall Lem\-ma~\ref{boundchi}):
\begin{align*}
  L(1,\chi)
  &=
  \sum_{d\ge1}\frac{\chi(d)}{d}
  =
  \sum_{d\le x}\frac{\chi(d)}{d}
  +\int_{x}^\infty\sum_{x<d\le t}\chi(d)dt/t^2
  \\&=
  \sum_{d\le x}\frac{\chi(d)}{d}
  +\Ocal^*\Bigl(\frac{\phi(q)}{2x}\Bigr).
\end{align*}
We treat the second summation in $d$ above by Axer's method
from~\cite{Axer*11} (see also \cite[Theorem 8.1]{Montgomery-Vaughan*06}):
\begin{equation*}
  \Bigl|\sum_{d\le x}\chi(d)\Bigl\{\frac{x}{d}\Bigr\}\Bigr|
  \le
  \sum_{d\le y}1
  +
  \sum_{m\le x/y}\Bigl|\sum_{d:[x/d]=m}\chi(d)\Bigl\{\frac{x}{d}\Bigr\}\Bigr|
  \le y + \frac{\phi(q)x}{2y}\le \sqrt{2\phi(q)x}
\end{equation*}
by selecting $y=\sqrt{\phi(q)x/2}$, the second inequality following by
Abel summation. All of this implies that
$ \sqrt{x}L(1,\chi)\le f_0(q)+\sqrt{2\phi(q)}+\phi(q)/(2\sqrt{x})$.
However, the previous lemma gives us a lower bound for $L(1,\chi)$ and
thus we should have
\begin{equation*}
  \frac{\pi}{4\phi(q)}-\frac{\pi}{\phi(q)^2}\le 
  \frac{f_0(q)}{\sqrt{x}}+\sqrt{\frac{2\phi(q)}{x}}
  +\frac{\phi(q)}{2x}
\end{equation*}

\noindent We substitute $x=q^4$, using the upper bound for $f_0(q)$
provided by Lem\-ma~\ref{boundf0}. Replacing the left hand side of the
above inequality by $\pi/8\phi(q)$, which is permissible by
Lemma~\ref{boundforphi}, together with the bound $\phi(q)\le q$, after
a short calculation we derive a contradiction for all $q \ge 45$.
Calculating using the exact expressions for $f_0(q)$ and $\phi(q)$
when $q\in\{15,\cdots, 45\}$, we also derive a contraction. For the
remaining $q$ it is easy enough to find primes $p\leqslant q^4$ such
that $p\equiv 1$ modulo $q$. Indeed, for $q=2,\cdots,14$ we may take
$p=3,7,5,11,7,29,17,19,11,23,37,53,29$ respectively.
\end{proof}

We quote the following result from~\cite{Montgomery-Vaughan*74}, which is a
strong form of the Brun-Titchmarsh inequality.
\begin{lemma}
  \label{BT}
  When $1\le q<x$, we have
  \begin{equation*}
    \sum_{\substack{y<p\le y+x,\\ p\equiv a[q]}}1\le \frac{2x}{\phi(q)\log(x/q)}.
  \end{equation*}
for any positive $y$.
\end{lemma}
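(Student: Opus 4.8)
The plan is to deduce this from the large sieve, in the sharp form with the clean majorant $N+Q^2$ due to Montgomery and Vaughan (equivalently to Selberg). First I would reduce the count to a one-dimensional sieve problem along the progression. Writing $n=a+qk$, the integers $n\in(y,y+x]$ with $n\equiv a\pmod q$ correspond to $k$ running over an interval $\mathcal I$ of length $N:=x/q$. A prime $p=a+qk$ exceeding every sieving modulus is, for each prime $\ell\nmid q$, automatically not divisible by $\ell$; in the $k$-variable this excludes exactly the one residue class $k\equiv -a\,q^{-1}\pmod{\ell}$, while primes $\ell\mid q$ impose no condition. Thus the set $\mathcal Z$ of those $k\in\mathcal I$ corresponding to the primes we wish to count (bar finitely many small ones) avoids, for each squarefree $d\le Q$ with $(d,q)=1$, a prescribed set of residues modulo $d$ of the expected size.

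Next I would invoke the arithmetic large sieve inequality in the form
\[
  |\mathcal Z|\,L(Q)\le N+Q^2,\qquad
  L(Q)=\sum_{\substack{d\le Q\\ (d,q)=1}}\frac{\mu^2(d)}{\phi(d)},
\]
the clean $N+Q^2$ being precisely the content of the Montgomery--Vaughan large sieve. The quantity $L(Q)$ is bounded below by the classical device $\mu^2(d)/\phi(d)=\sum_{\operatorname{rad}(m)=d}1/m$, which on summing over $d$ coprime to $q$ gives $L(Q)\ge\sum_{m\le Q,\,(m,q)=1}1/m$, and this last sum exceeds $\tfrac{\phi(q)}{q}\log Q$ by an elementary estimate.

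Combining these, $|\mathcal Z|\le(N+Q^2)\big/\big(\tfrac{\phi(q)}{q}\log Q\big)$, and optimizing $Q$ is the crux. One balances so that $Q^2=o(N)$ (roughly $Q=\sqrt{N}/\sqrt{\log N}$): at the stationary point $N=Q^2(2\log Q-1)$ one has $N+Q^2\sim N$ while $\log Q\sim\tfrac12\log N=\tfrac12\log(x/q)$, and the two effects conspire to produce the constant $2$, yielding
\[
  |\mathcal Z|\le\frac{2x}{\phi(q)\log(x/q)}.
\]
Finally I would add back the primes $p\le Q$ lying in the progression, of which there are $O(Q/q)$, so that they are absorbed into the lower-order terms.

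The main obstacle is extracting the \emph{exact} constant $2$, rather than merely $2+o(1)$, uniformly over the whole range $q<x$, including when $q$ is nearly as large as $x$ and $\log(x/q)$ is small. This forces one to use the sharp large-sieve majorant $N+Q^2$ with no spurious additive term, a precise (not merely asymptotic) lower bound for $L(Q)$, and an exact rather than heuristic optimization of $Q$; the careful bookkeeping near the endpoints is what makes the clean statement genuinely delicate, and is the reason we are content to quote it from Montgomery and Vaughan.
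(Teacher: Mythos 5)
The paper does not prove this lemma at all: it is quoted verbatim as Theorem~2 of Montgomery and Vaughan's work (via their Hilbert's-inequality paper), so the only real question is whether your reconstruction of their argument would succeed. The overall route you describe --- pass to the variable $k$ with $n=a+qk$, sieve the interval of length $N=x/q$ by one class modulo each prime $\ell\nmid q$, apply the arithmetic large sieve with majorant $N+Q^2$, and bound $L(Q)=\sum_{d\le Q,(d,q)=1}\mu^2(d)/\phi(d)$ from below by $\tfrac{\phi(q)}{q}\log Q$ --- is indeed the standard opening of that proof, and the lower bound for $L(Q)$ via $\mu^2(d)/\phi(d)=\sum_{\mathrm{rad}(m)=d}1/m$ is correct.

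The genuine gap is in the optimization step: the claim that the two effects ``conspire to produce the constant $2$'' is false. At the stationary point of $Q\mapsto (N+Q^2)/\log Q$ one has $N=Q^2(2\log Q-1)$ and the bound equals $2Q^2$, and comparing $2Q^2$ with the target $2N/\log N=2Q^2(2\log Q-1)/(2\log Q+\log(2\log Q-1))$ shows the large-sieve bound \emph{exceeds} the target as soon as $\log(2\log Q-1)>-1$, i.e.\ for every $Q\ge 2$. Equivalently, writing $Q^2=\theta N$, the requirement $(1+\theta)\log N\le \log N+\log\theta+O(1)$ forces $\theta\log N-\log\theta=O(1)$, whose minimum over $\theta$ is $1+\log\log N\to\infty$. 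So this argument only yields
\begin{equation*}
  \frac{2x}{\phi(q)\bigl(\log(x/q)-\log\log(x/q)+O(1)\bigr)},
\end{equation*}
a constant $2+o(1)$ rather than the clean $2$ with denominator exactly $\log(x/q)$; no refinement of the additive constant in the lower bound for $L(Q)$ repairs this, since the loss is a genuine $\log\log$. Removing that loss is precisely the hard content of Montgomery and Vaughan's theorem and requires a substantially more elaborate argument than balancing $Q$. Your closing paragraph senses this difficulty and retreats to citing the source --- which is exactly what the paper does --- but as a proof the sketch does not establish the stated inequality.
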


\begin{lemma}
  \label{dusart}
  We have $\pi(x)\ge x/(\log x-1)$ when $x\ge 5\,393$. Furthermore the number
  of primes not more than $x$ but prime to some fixed modulus $q$ below $x$ is
  at least $x/\log x$, again when $x\ge 5\,393$.
\end{lemma}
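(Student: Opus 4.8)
The first assertion is simply an invocation of Dusart's explicit lower bound for the prime counting function, which states precisely that $\pi(x)\geqslant x/(\log x-1)$ for all $x\geqslant 5393$. The plan is to cite this directly rather than reprove it, since its justification rests on explicit zero-free regions together with numerical verification that lie outside the scope of the elementary arguments developed so far in this section.

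For the second assertion, I would observe that a prime $p\leqslant x$ fails to be coprime to $q$ precisely when $p\mid q$, so that the number of such exceptional primes is at most $\omega(q)$, the number of distinct prime divisors of $q$. Since $2^{\omega(q)}\leqslant\prod_{p\mid q}p\leqslant q<x$, one obtains the crude bound $\omega(q)\leqslant\log x/\log 2$. Consequently
\begin{equation*}
  \#\{p\leqslant x:(p,q)=1\}\geqslant \pi(x)-\omega(q)\geqslant \frac{x}{\log x-1}-\frac{\log x}{\log 2},
\end{equation*}
where the last step uses the first assertion of the lemma.

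It then remains to check that the right-hand side is at least $x/\log x$. Since
\begin{equation*}
  \frac{x}{\log x-1}-\frac{x}{\log x}=\frac{x}{(\log x)(\log x-1)},
\end{equation*}
the desired inequality reduces to $x\log 2\geqslant (\log x)^2(\log x-1)$, whose left-hand side grows linearly in $x$ while the right-hand side grows only as a power of $\log x$. I would verify this directly at the threshold $x=5393$ (where the left-hand side exceeds the right by a comfortable margin) and then note that the difference is increasing, so that the inequality persists for all larger $x$ without further case analysis.

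The substance of the first assertion is deferred entirely to Dusart, and the second assertion is a short deduction from it. The only point requiring genuine care is the uniform verification of the final elementary inequality across the whole range $x\geqslant 5393$, which amounts to confirming it at the threshold and checking that the relevant gap is monotone thereafter; I expect this to be the main, and essentially the only, obstacle.
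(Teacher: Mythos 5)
Your proposal is correct and follows essentially the same route as the paper: cite Dusart for the first bound, then observe that at most $\omega(q)\leqslant(\log x)/\log 2$ primes up to $x$ divide $q$ and check that $x/(\log x-1)-(\log x)/\log 2\geqslant x/\log x$ in the stated range. The paper simply asserts this last inequality (for $x\geqslant 5000$) without the explicit reduction to $x\log 2\geqslant(\log x)^2(\log x-1)$, so your verification is if anything slightly more detailed than the original.
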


\begin{proof}
  The first inequality is taken from \cite{Dusart*07}. For the second, we
  simply note that the number of prime factors of $q$ is at most $(\log
  x)/\log 2$ and that
  \begin{equation*}
    \frac{x}{\log x-1}-\frac{\log x}{\log 2}\ge \frac{x}{\log x}
  \end{equation*}
  when $x\ge 5\,000$.
\end{proof}

The final ingredient in the argument will be Kneser's Theorem, which we now
recall (see \cite[Theorem 4.3]{Nathanson*96} or~\cite[Theorem 5.5]{Tao-Vu*06}).
\begin{lemma}
  \label{kneser}
  Let $A$ and $B$ be two subsets of the finite abelian group $G$. Let $H$ be
  the subgroup of elements $h$ of $G$ that stabilizes $A+B$, i.e. that are
  such that $h+A+B=A+B$. We have
  \begin{equation*}
    |A+B|\ge |A+H|+|B+H|-|H|.
  \end{equation*}
\end{lemma}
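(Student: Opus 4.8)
The plan is to prove this by induction on $|B|$ (the statement is symmetric in $A$ and $B$, since $A+B=B+A$), after first recording the structural meaning of the claim. Translating $A$ or $B$ alters none of $|A+B|$, $|A+H|$, $|B+H|$ or the stabilizer $H$, so I would normalize to $0\in A\cap B$. Because $H$ stabilizes $A+B$, the sumset $A+B$ is a union of $H$-cosets, as are $A+H$ and $B+H$; writing $\bar A,\bar B$ for the images in the quotient $G/H$, one has $|A+H|=|H|\,|\bar A|$, $|B+H|=|H|\,|\bar B|$ and $|A+B|=|H|\,|\bar A+\bar B|$. Moreover, by the maximality of $H$, the stabilizer of $\bar A+\bar B$ in $G/H$ is trivial. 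Hence the assertion is exactly equivalent to $|\bar A+\bar B|\ge|\bar A|+|\bar B|-1$ in the presence of a \emph{trivial} stabilizer; in other words, Kneser's theorem is the Cauchy--Davenport inequality for sumsets that are already aperiodic.

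Keeping track of the stabilizer explicitly throughout the induction (the transform below may change it, which is exactly the source of the difficulty), I would run the induction on the full inequality. The main tool is the Dyson $e$-transform: for $e\in G$ set $A_e=A\cup(e+B)$ and $B_e=(A-e)\cap B$. A direct verification gives the two facts I rely on, namely $A_e+B_e\subseteq A+B$ and $|A_e|+|B_e|=|A|+|B|$ (the latter because $e+B_e=A\cap(e+B)$, so $|A_e|=|A|+|B|-|B_e|$). For the base case $|B|=1$ the sumset $A+B$ is a translate of $A$, so $H=H(A)$, whence $A+H=A$ and $B+H$ is a single coset, and the inequality holds with equality.

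For the inductive step ($|B|\ge 2$) I would split into two cases by whether any transform genuinely simplifies the problem. If every $e$ with $(A-e)\cap B\neq\emptyset$ satisfies $B_e=B$, then $B\subseteq A-e$ for all such $e$; taking $e\in A$ (admissible since $0\in B$) gives $e+B\subseteq A$ for every $e\in A$, hence $A+B=A$, and then each individual $b\in B$ satisfies $A+b=A$, so $B\subseteq H$. Here $A+H=A+B$ and $B+H=H$, and the inequality again holds with equality. Otherwise there is an $e$ with $\emptyset\neq B_e\subsetneq B$; since $|B_e|<|B|$, I apply the induction hypothesis to $(A_e,B_e)$, obtaining $|A_e+B_e|\ge|A_e+H_e|+|B_e+H_e|-|H_e|$ with $H_e=H(A_e+B_e)$, and then transport this up to $A+B$ via $A_e+B_e\subseteq A+B$ and the size identity $|A_e|+|B_e|=|A|+|B|$.

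The hard part is precisely this transport. Because $A_e+B_e$ is in general a \emph{proper} subset of $A+B$, its stabilizer $H_e$ need not coincide with the stabilizer $H$ we want, and the inductive bound is phrased in terms of $H_e$. The crux is therefore a careful comparison of the two stabilizers: one must relate $H_e$ to $H$ (for instance by determining whether $H_e\subseteq H$, and exploiting that $A_e$ and $B_e$ are unions of $H_e$-cosets together with $|A_e|+|B_e|=|A|+|B|$) so that the $H_e$-bound upgrades to the desired $H$-bound. This stabilizer bookkeeping, with its attendant sub-cases, is the entire reason Kneser's theorem is genuinely harder than Cauchy--Davenport, and it is where I expect the bulk of the effort to lie.
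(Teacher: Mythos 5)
The paper does not prove this lemma at all: it is quoted as a known result, with pointers to \cite[Theorem 4.3]{Nathanson*96} and \cite[Theorem 5.5]{Tao-Vu*06}, so there is no in-paper argument to compare yours against. Judged on its own terms, your write-up is an accurate outline of the standard Dyson-transform proof, and everything you actually carry out is correct: the normalization by translation, the observation that passing to $G/H$ makes the statement equivalent to the aperiodic inequality $|\bar A+\bar B|\ge|\bar A|+|\bar B|-1$, the two identities $A_e+B_e\subseteq A+B$ and $|A_e|+|B_e|=|A|+|B|$, the base case $|B|=1$, and the degenerate case in which no $e$-transform shrinks $B$ (where you correctly deduce $B\subseteq H$ and obtain equality).

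But the proposal is not a proof, and you say so yourself: the transport of the inductive bound from $(A_e,B_e,H_e)$ back to $(A,B,H)$ is announced as ``the crux'' and then left entirely undone. Concretely, the inductive hypothesis only yields $|A+B|\ge|A_e+B_e|\ge|A_e|+|B_e|-|H_e|=|A|+|B|-|H_e|$, and since $H_e$ stabilizes the possibly much smaller set $A_e+B_e$ it can be a large subgroup bearing no relation to $H$; this quantity need not dominate $|A+H|+|B+H|-|H|$. Closing that gap is the entire content of Kneser's theorem beyond Cauchy--Davenport: the standard treatments need a further induction and case analysis (for instance over the $H$-cosets that meet $A+B$ without being contained in it, or Kneser's argument that when $H_e\not\subseteq H$ one can pass to a coset on which $A+B$ is deficient and iterate). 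One simplification worth recording: the stated form follows from the apparently weaker $|A+B|\ge|A|+|B|-|H|$ by applying the latter to $A+H$ and $B+H$, since $(A+H)+(B+H)=A+B+H=A+B$ and the stabilizer is unchanged; but that weaker form still requires exactly the stabilizer analysis you have deferred. As it stands, the proposal identifies the right strategy and honestly flags, but does not supply, the step on which the theorem actually rests.
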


\section{Proof of Theorem~\ref{main}}

Let us first treat the case $x\ge 10^{16}$.

Let $X=x^{1/3}$. Since this parameter is at least $10^5$, Lemma~\ref{dusart}
tells us that the number $\pi_q(X)$ of primes below $X$ which are coprime to
$q$ is at least $X/\log X$.  The Brun-Titchmarsh inequality in the form given
by Montgomery \& Vaughan, recalled in Lemma~\ref{BT}, tells us that the number
of primes less than $X$ in any progression $a\mod q$, for $a$ prime to~$q$, is
at most $\frac{32}{13}X/(\phi(q)\log X)$. This implies, when compared to the
total number of primes coprime to $q$ given by Lemma~\ref{dusart}, that at
least $\frac{13}{32}\phi(q)$ such residue classes contain a prime. Let us call
this set of classes $\Ascr$ and apply Kneser's Theorem (Lemma~\ref{kneser}) to
the group $G$ of invertible residues modulo~$q$. Let $H$ be the stabilizer of
$\Ascr\cdot\Ascr$. We divide into cases according to the index of $H$.

If $H$ is equal to $G$ then, since
$\Ascr\cdot\Ascr\cdot H=\Ascr\cdot\Ascr$, we have $\Ascr\cdot\Ascr=G$
and of course $\Ascr\cdot\Ascr\cdot\Ascr=G$.

If $H$ has index $2$, then it is the kernel of some quadratic
character $\chi$. Because $\Ascr$ generates $G$ multiplicatively,
there is a point $a$ in $\Ascr$ such that $\chi(a)=-1$. By
Lemma~\ref{linnik}, there is another one, say $a'$, such that
$\chi(a')=1$. Hence $\Ascr\cdot\Ascr$ also has a point $b$ such that
$\chi(b)=1$ and one, say $b'$, such that $\chi(b')=-1$. This implies
that $\Ascr\cdot\Ascr\cdot H=G$, i.e. $\Ascr\cdot\Ascr=G$.

When $H$ is of index~3, then $\Ascr\cdot H$ covers at least 2 $H$-cosets
(since $\tfrac{13}{32}>\tfrac13$) and is thus of cardinality at least $2\phi(q)/3$. Kneser's Theorem
ensures that $|\Ascr\cdot\Ascr|\ge \phi(q)$, i.e. that again
$\Ascr\cdot\Ascr=G$.

When $H$ is of index~4, then $\Ascr\cdot H$ covers at least 2
$H$-cosets (since $\tfrac{13}{32}>\tfrac14$) and is thus of
cardinality at least $\phi(q)/2$. By Kneser's
Theorem,
$$|\Ascr\cdot\Ascr|\ge 2|\Ascr\cdot H| - |H|\ge \frac34\phi(q).$$

When $H$ is of index~$Y$ say, with $Y$ at least 5, let us write
$|\Ascr|/\phi(q)=1/U$. The set $\Ascr\cdot H$ is made out of at least $\lceil
Y/U\rceil$ cosets modulo~$H$. Using the same manipulation as above, Kneser's
Theorem ensures that $|\Ascr\cdot\Ascr|/\phi(q)\ge (2\lceil Y/U\rceil-1)/Y$. A
quick computation shows that the minimum of $(2\lceil Y/U\rceil-1)/Y$ when $Y$
ranges $\{5,6,7,8,9\}$ is reached at $Y=7$ and has value
$5/7$. When $Y$ is larger than 10, we directly check that
$(2\lceil Y/U\rceil-1)/Y\ge \frac{2}{U}-\frac1Y\ge
\frac{13}{16}-\frac{1}{10}\ge\frac{7}{10}$.

Combining these final two cases, we have proved that $|\Ascr\cdot\Ascr|\ge
\frac{7}{10}\phi(q)$. Let $b$ be an arbitrary invertible residue class
modulo~$q$. The set $b/\Ascr$ is of cardinality at least $|\Ascr|$ and, since $\frac{13}{32}$ is greater then $\frac{3}{10}$, this is
strictly larger than the size of the complementary set of
$\Ascr\cdot\Ascr$. Therefore these sets have a point in common: there exist
$a$, $a_1$ and $a_2$, all three in $\Ascr$ such that $b/a=a_1a_2$, proving our
theorem in this case.

\bigskip It remains to deal with $ x< 10^{16}$, which is done
by explicit calculation. The inclusion of this addendum was kindly suggested to us by an anonymous referee. Indeed, when $x < 10^{16}$ , the modulus $q$
is restricted to be not more than $10$, implying that only a limited number of
congruence classes are to be looked at. We proceed by hand:

\smallskip\noindent\hspace{2pt}
\begin{minipage}[b]{0.48\linewidth}
\noindent When $q=2$, we only need $x\ge 3^3$.
\par\noindent When $q=3$, we only  need $x\ge 7^3$.
\par\noindent When $q=4$, we only   need $x\ge 5^3$.
\par\noindent When $q=5$, we only   need $x\ge 19^3$.
\par\noindent When $q=6$, we only   need $x\ge 11^3$.
\end{minipage}\hfill%
\begin{minipage}[b]{0.50\linewidth}
\par\noindent When $q=7$, we  only need $x\ge 29^3$.
\par\noindent When $q=8$, we only   need $x\ge 23^3$.
\par\noindent When $q=9$, we only   need $x\ge 23^3$.
\par\noindent When $q=10$, we only   need $x\ge 19^3$.
\end{minipage}
This takes care of the situation when $x\ge 29^3$. However, when $x$
is below~$29^3$, the bound $x^{1/16}$ is less than $2$. This ends the proof of our theorem.

\bibliographystyle{plain}

\end{document}